\documentclass[preprint,12pt]{elsarticle}
\usepackage[utf8]{inputenc}

\usepackage{fullpage}

\usepackage{amsmath}
\usepackage{amssymb}
\usepackage{latexsym}
\usepackage{amsthm}
\usepackage[retainorgcmds]{IEEEtrantools}

\newtheorem{thm}{Theorem}
\newtheorem{lem}[thm]{Lemma}

\newtheorem{clm}{Claim}
\newtheorem*{clm*}{Claim}

\usepackage{minitoc}

\usepackage{ifpdf}

\journal{Discrete Mathematics}

\begin{document}

\begin{frontmatter}



\title{Point determining digraphs, $\{0,1\}$-matrix partitions, and dualities in full homomorphisms}


\author{Pavol Hell}
\author{C\'esar Hern\'andez-Cruz}
\address{School of Computing Science\\
Simon Fraser University\\
Burnaby, B.C., Canada V5A 1S6}

\begin{abstract}
We prove that every point-determining digraph $D$ contains a vertex $v$ such that $D-v$ is also point determining.   We apply this result to show that for any $\{0,1\}$-matrix $M$, with $k$ diagonal zeros and $\ell$ diagonal ones, the size of a minimal $M$-obstruction is at most $(k+1)(\ell+1)$.   This extends the results of Sumner, and of Feder and Hell, from undirected graphs and symmetric matrices to digraphs and general matrices.
\end{abstract}

\begin{keyword}


\end{keyword}

\end{frontmatter}


\section{Introduction}

We consider partitions of a digraph $D$ into sets that satisfy certain internal constraints (the set induces an independent set or a clique), and external constraints (a sets is completely adjacent or completely non-adjacent to another set). These constraints are encoded in a $\{0,1\}$-matrix $M$ defined below. We assume that the digraph $D$ has no loops. (We will allow loops, but only in a digraph that will be denoted exclusively by $H$.) A {\em strong clique} of $D$ is a set $C$ of vertices such that for any two distinct vertices $x, y \in C$ both arcs $(x, y), (y, x)$ are in $D$; and an {\em independent set} of $D$ is a set $I$ of vertices such that for any two vertices $x, y \in C$ neither pair $(x, y), (y, x)$ is an arc of $D$. Let $S, S'$ be two disjoint sets of vertices of $D$: we say that $S$ is {\em completely adjacent to} $S'$ (or $S'$ is completely adjacent from $S$) if for any $x \in S, x' \in S'$, the arc $(x, x')$ is in $D$; and we say that $S$ is {\em completely non-adjacent to} $S'$ (or $S'$ is completely non-adjacent from $S$) if for any $x \in S, x' \in S'$, the pair $(x, x')$ is not an arc of $D$.

Throughout this paper, $M$ will be a $\{0,1\}$-matrix with $k$ diagonal $0$'s and $\ell$ diagonal $1$'s. For convenience we shall assume that the rows and columns of $M$ are ordered so that the first $k$ diagonal entries are $0$, and the last $\ell$ diagonal entries are $1$. (Thus $k+\ell$ is the size of the matrix.)

An $M$-{\em partition} of a digraph $D$ is a partition of its vertex set $V(D)$ into parts $V_1, V_2, \dots, V_{k+\ell}$ such that 

\begin{itemize}
\item
$V_i$ is an independent set of $D$ if $M(i,i)=0$
\item
$V_i$ is a strong clique of $D$ if $M(i,i)=1$
\item
$V_i$ is completely non-adjacent to $V_j$ if $M(i,j)=0$
\item
$V_i$ is completely adjacent to $V_j$ if $M(i,j)=1$
\end{itemize}

In \cite{motwa} we introduced a more general version of matrix partitions, in which matrices are allowed to have an $*$ entry implying no restriction on the corresponding set, or pair of sets. For a survey of results on $M$-partitions we direct the reader to \cite{survey}.

A {\em full homomorphism} of a digraph $D$ to a digraph $H$ is a mapping $f : V(D) \to V(H)$ such that for distinct vertices $x$ and $y$, the pair $(x,y)$ is an arc of $D$ if and only if $(f(x),f(y))$ is an arc of $H$. The following observation is obvious: let $H$ denote the digraph whose adjacency matrix is $M$. (Note that $H$ has loops if $\ell > 0$.) Then $D$ admits an $M$-partition if and only if it admits a full homomorphism to $H$. It should be pointed out that our definition of full homomorphism (in particular the requirement that $x, y$ be distinct) is taylored to correspond to matrix partitions as defined in \cite{motwa}. The standard definition \cite{hombook,ales,ball} does not require this distinctness; this accounts for small discrepancies between the results of this paper and that of \cite{ball}. However, when $H$ has no loops, i.e., when $\ell=0$, the two definitions coincide.

Undirected graphs are viewed in this paper as special cases of digraphs, i.e., each undirected edge $xy$ is viewed as the two arcs $(x,y), (y,x)$. For a symmetric $\{0,1\}$-matrix $M$, the same definition applies to define an $M$-partition of an undirected graph $G$ \cite{motwa,survey}.

The questions investigated here have been studied for undirected graphs in \cite{fedhel,ball}, cf. \cite{survey}. It is shown in \cite{fedhel,ball} that for any symmetric $\{0,1\}$-matrix $M$ (i.e., any undirected graph $H$ with possible loops) there is a finite set $\cal G$ of graphs such that $G$ admits an $M$-partition (i.e., a full homomorphism to $H$) if and only if it does not contain an induced subgraph isomorphic to a member of $\cal G$. This property is what \cite{ball} calls a {\em duality} of full homomorphisms. Alternately \cite{survey}, we define a {\em minimal obstruction} to $M$-partition to be a digraph $D$ which does not admit an $M$-partition, but such that for any vertex $v$ of $D$, the digraph $D - v$ does admit an $M$-partition. Thus the results of \cite{ball,fedhel} imply that each symmetric $\{0,1\}$-matrix $M$ has only finitely many minimal graph obstructions. In \cite{fedhel} it is shown that these minimal graph obstructions have at most $(k+1)(\ell+1)$ vertices, and that there are at most two minimal graph obstructions with precisely $(k+1)(\ell+1)$ vertices. For the purposes of this proof, the authors of \cite{fedhel} consider the following concept. A graph is {\em point determining} if distinct vertices have distinct open neighbourhoods. According to Sumner \cite{sumner}, each point determining graph $H$ contains a vertex $v$ such that $H - v$ is also point determining; the authors of \cite{fedhel} derived their bound by proving a refined version of Sumner's result.

For digraphs (and $\{0,1\}$-matrices $M$ that are not necessarily symmetric), it is still true that each $M$ has at most a finite set of minimal digraph obstructions \cite{ball,survey}. In this paper we prove that the optimal bound still applies, i.e., that it is still the case that each minimal digraph obstruction has at most $(k+1)(\ell+1)$ vertices. (This was conjectured in earlier versions of \cite{survey}.) For this purpose we define a digraph version of point determination and prove the analogue of Sumner's result. Since undirected graphs can be viewed as symmetric digraphs, our results imply the $(k+1)(\ell+1)$ bound for graphs from \cite{fedhel}, as well as the basic version of Sumner's result.

We leave open the question whether a $\{0,1\}$-matrix $M$ always has at most two minimal digraph obstructions with $(k+1)(\ell+1)$ vertices; we did not find a counterexample.

In Section 2, we prove the above digraph version of Sumner's theorem, using the tools from \cite{fedhel}. In Section 3 we use this result to derive our $(k+1)(\ell+1)$ bound for the size of a minimal $M$-obstruction which has no (true or false) twins. In Section 4 we do the same for minimal $M$-obstructions that do have twins.


\section{Point-determining digraphs}

Let $D$ be a digraph and let $u,v,w$ be distinct vertices in $D$; we say that vertex $w$  {\em distinguishes} vertices $u,v$ in $D$ if exactly one of $u,v$ is in the in-neighborhood of $w$, or exactly one of $u,v$ is in the out-neighborhood of $w$.  We say that $u, v$ are {\em twins} in $D$ if there is no vertex that distinguishes them in $D$. We say that twins $u,v$ are {\em true} twins if $\{ u, v \}$ is a strong clique and {\em false} twins if $\{u,v\}$ is an independent set. We say that a digraph is {\em point-determining} if it does not contain a pair of false twins. Note that $D$ has no true twins if and only if the complement of $D$ is point-determining. 

In this section we will prove the following digraph analog to Sumner's Theorem.

\begin{thm} \label{nott}
If $D$ is a point-determining digraph, then there exists at least one vertex $v \in V(D)$ such that $D - v$  is point-determining.
\end{thm}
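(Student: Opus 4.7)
The plan is to proceed by contradiction, in the spirit of Sumner's original argument.

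Assume $D$ is point-determining but that $D-v$ has a pair of false twins for every $v\in V(D)$. For each such $v$, fix a witness pair $\{a_v,b_v\}$ of false twins in $D-v$; since $D$ itself has no false twins, $v$ must be the unique vertex of $D$ that distinguishes $a_v$ from $b_v$. This forces $\{a_v,b_v\}$ to be independent in $D$, and the in-neighborhoods of $a_v$ and $b_v$ in $D$ to agree outside $v$, as do the out-neighborhoods; moreover at least one of the two symmetric differences actually equals $\{v\}$, otherwise $v$ would fail to distinguish the pair.

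The plan is to leverage this rigid near-twin structure by an iterative construction. Starting from an arbitrary $v_0\in V(D)$, set $v_{i+1}:=a_{v_i}$ for each $i\ge 0$. Since $V(D)$ is finite, the sequence must eventually revisit a vertex, producing a closed chain of triples $(v_t,a_{v_t},b_{v_t})$ in which consecutive members are linked through the near-twin relation above. Walking around the chain and carefully tracking how the in- and out-neighborhoods of the vertices $b_{v_t}$ compare with those of the vertices $v_t$, I expect to deduce the existence of a pair of distinct vertices of $D$ with identical in- and out-neighborhoods in $D$ forming an independent set, i.e., a pair of false twins of $D$ itself, contradicting the hypothesis.

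The main obstacle is the richer case analysis imposed by the directed setting: a vertex $v$ can distinguish $\{a_v,b_v\}$ in up to four different ways (by lying in the in-neighborhood of exactly one of them, or in the out-neighborhood of exactly one of them), and several of these modes may hold simultaneously. To keep the cyclic argument manageable, I would first refine the choice of witness pairs so that a single distinguishing type is maintained along the cycle --- say, at every step $v_t$ is in the out-neighborhood of $a_{v_t}$ but not of $b_{v_t}$ --- either via a pigeonhole over the four types, or by an extremal choice of the witness maximizing a natural neighborhood parameter such as $|N^+_D(a_v)|+|N^-_D(a_v)|$. Once such a uniform mode is secured, the analysis should close by mirroring the undirected argument of \cite{fedhel,sumner}.
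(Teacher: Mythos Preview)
Your plan shares the overall shape of the paper's argument --- assume every vertex is ``red'' in some triple, build a chain, and derive a contradiction from its eventual closure --- but it is missing the structural lemma that makes the chain rigid enough for the contradiction to go through. In the paper this is Lemma~\ref{triples}: whenever two triples $T_1,T_2$ share a vertex that is green in $T_1$ and red in $T_2$, they must share a \emph{second} vertex that is red in $T_1$ and green in $T_2$. This forces the chain of triples to interlock as
\[
(v_1,\{u_1,u_2\}),\ (u_2,\{v_1,v_2\}),\ (v_2,\{u_2,u_3\}),\ (u_3,\{v_2,v_3\}),\ \dots,
\]
so that consecutive $u_i$'s (respectively consecutive $v_i$'s) are near-twins of one another. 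It is this propagation along a \emph{single} sequence --- $u_i$ and $u_{i+1}$ agreeing everywhere except at $v_i$ --- that lets one push adjacency information around the cycle and reach a contradiction when it closes.

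Your iteration $v_{i+1}:=a_{v_i}$ only guarantees that consecutive triples share \emph{one} vertex: $v_{i+1}$ is green in the $i$-th triple and red in the $(i{+}1)$-st. The near-twin relation you obtain links $v_{i+1}$ with $b_{v_i}$, not $v_{i+1}$ with $v_i$, so nothing composes around the cycle; the vertices $b_{v_0},b_{v_1},\dots$ may be scattered arbitrarily and need bear no relation to the $v_j$'s. Your proposed remedies do not close this gap: pigeonhole on the four distinguishing modes gives only that one mode occurs on a quarter of the steps, not on a contiguous run, and the extremal-choice idea is left vague. What you actually need first is (the directed version of) the two-vertex interlocking lemma; once that is in hand, the cycle analysis is still delicate --- see the proof of Lemma~\ref{triplesnored} --- but at least it has the right skeleton.
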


To prove this we will consider the notion of a triple in a point-determining digraph (cf. \cite{fedhel} for an analogous undirected concept). Let $D$ be a point-determining digraph. A triple $T = (x, \{y, z\})$ of G consists of a vertex $x$ of $D$, called the red vertex of $T$, and an unordered pair $\{y, z\}$ of vertices of $D$, called the green vertices of $T$, such that $y, z$ are false twins in $D-x$. (Thus $x$ is the only vertex of G that distinguishes $y$ and $z$.)   We begin with two lemmas.

\begin{lem} \label{triples}
Let $D$ be a point-determining digraph, and let $T_1$ and $T_2$ be two triples of $D$.   If $T_1$ and $T_2$ intersect in a vertex that is green in $T_1$ and red in $T_2$, then they intersect in another vertex that is green in $T_2$ and red in $T_1$.
\end{lem}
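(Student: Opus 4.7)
The plan is proof by contradiction. Denote $T_1 = (x_1,\{y_1,z_1\})$ and $T_2 = (x_2,\{y_2,z_2\})$, and without loss of generality take the given common vertex to be $y_1 = x_2$ (green in $T_1$, red in $T_2$). What I need to show is $x_1 \in \{y_2,z_2\}$; since $x_1$ is red in $T_1$ while $y_1 = x_2$ is green in $T_1$, the ``another vertex'' requirement (distinctness from the first intersection point) is automatic once $x_1$ is located in $\{y_2,z_2\}$.

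The main tool is the definition of a triple: $y_1$ and $z_1$ are false twins in $D - x_1$, so they share all arc-relations with every vertex of $V(D) \setminus \{x_1,y_1,z_1\}$. Assume for contradiction that $x_1 \notin \{y_2,z_2\}$; I then split on whether $z_1 \in \{y_2,z_2\}$. If $z_1 \notin \{y_2,z_2\}$, then both $y_2$ and $z_2$ lie in $V(D) \setminus \{x_1,y_1,z_1\}$, so the false-twin property transfers distinguishability: $z_1$ distinguishes $y_2$ from $z_2$ exactly when $y_1 = x_2$ does, which it does by hypothesis. This produces a distinguisher of $\{y_2,z_2\}$ different from $x_2$ (note $z_1 \neq y_1 = x_2$), contradicting the uniqueness of $x_2$ as the sole distinguisher required by $T_2$.

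In the remaining case I relabel so that $z_1 = y_2$. Then $z_2 \neq x_1$ by assumption, while $z_2 \neq y_1, z_1$ as well, so $y_1$ and $z_1$ agree on their arcs to and from $z_2$. Independence of $\{y_2,z_2\} = \{z_1,z_2\}$ rules out arcs between $z_1$ and $z_2$, hence forces no arcs between $y_1$ and $z_2$ either; and independence of $\{y_1,z_1\} = \{y_1,y_2\}$ rules out arcs between $y_1$ and $y_2$. Consequently $x_2 = y_1$ fails to distinguish $y_2$ from $z_2$, again contradicting the definition of $T_2$.

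The subtle obstacle to anticipate is precisely the case $z_1 \in \{y_2,z_2\}$: the clean false-twin-transfer argument used in the first case requires $z_1$ to lie outside the pair being compared, and the overlapping case has to be closed by a separate bookkeeping argument using the independence of the two green pairs. Once that case split is recognised, each branch is an immediate application of the false-twin and independence constraints packaged into the definition of a triple.
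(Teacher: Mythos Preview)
Your proof is correct and follows essentially the same approach as the paper's: the same two-case split on whether the other green vertex of $T_1$ lies in the green pair of $T_2$, with the same contradictions in each branch (a second distinguisher of the green pair of $T_2$ in the disjoint case, and failure of $x_2$ to distinguish its own green pair in the overlapping case). The only cosmetic difference is that in the disjoint case you transfer distinguishability directly via the false-twin property of $\{y_1,z_1\}$, whereas the paper phrases the same step as ``one of the two green vertices of the other triple distinguishes $y,z$''; these are equivalent formulations.
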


\begin{proof}
Consider two triples that share a vertex $z$ which is red in one triple and green in the other, say triples $T_1=(z, {u, v})$ and $T_2 = (x, {y, z})$. If $\{ x, y \} \cap \{u, v\} = \varnothing$, then since $z$ is the unique vertex distinguishing $u$ and $v$, the vertex $y$ does not distinguish $u$ and $v$. This means that one of the vertices $u$, $v$ distinguishes $y$ and $z$, which contradicts the fact that $(x, \{y, z\})$ is a triple of $D$ (\emph{i.e.}, $x$ is the only vertex of $D$ distinguishing $y$ and $z$). If $y \in \{u, v\}$ and $x \notin \{u, v \}$, say, $y = u$ and $v  \ne x$, then 
$v$ is not adjacent to $u = y$, so $v$ is not adjacent to $z$, because $(x, \{y, z\})$ is a triple and $v \ne x$. The vertices $u = y$ and $z$ are not  adjacent either, as $(x, \{y, z\})$ is a triple; this contradicts the fact that $(z, \{u,
v\})$ is a triple. Therefore $x$ must be one of $u, v$.
\end{proof}

\begin{lem} \label{triplesnored}
Let $D$ be a point-determining digraph.   There exists at least one vertex in $D$ that is red in no triple of $D$.
\end{lem}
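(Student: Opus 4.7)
I will argue by contradiction: suppose every vertex of $D$ is red in some triple, and derive a contradiction with point-determination. The main tool is the following corollary of Lemma~\ref{triples}: if $T = (x, \{y, z\})$ is any triple and $T'$ is any triple in which $y$ is red (such a $T'$ exists by hypothesis), then Lemma~\ref{triples} applied to $T$ and $T'$ (shared vertex $y$ is green in $T$ and red in $T'$) forces the red vertex $x$ of $T$ to appear as a green vertex of $T'$. Hence every triple having $y$ as its red vertex contains $x$ as a green vertex.

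From this corollary I deduce that each vertex $v$ is red in a \emph{unique} triple. If $(v, \{a, b\})$ is a triple at $v$, then the corollary (applied to this triple and any triple at $a$) forces $v$ to be a green vertex of some triple at $a$; applying the corollary again (to that triple at $a$ and any triple at $v$) I conclude that every triple at $v$ contains $a$ as a green vertex. Symmetrically, every triple at $v$ contains $b$ as a green vertex. Since a triple has only two green vertices, the triple at $v$ is uniquely determined, with green pair $A(v) := \{a, b\}$. The relation $w \in A(v)$ is visibly symmetric, so $v \mapsto A(v)$ encodes a $2$-regular undirected graph $H$ on $V(D)$; thus $H$ is a disjoint union of cycles, each of length at least $3$.

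Fix one such cycle $y_1 y_2 \cdots y_n y_1$ of $H$. For each $i$, the (unique) triple at $y_i$ is $(y_i, \{y_{i-1}, y_{i+1}\})$, so $y_{i-1}$ and $y_{i+1}$ are false twins in $D - y_i$: there are no arcs between them, and they agree in both directions with every vertex other than $y_i$. The plan is to propagate these agreement conditions around the cycle and deduce that, for every $k$, $y_k$ has the same arc-relation with $y_{k-1}$ as with $y_{k+1}$ in both directions. This contradicts the triple $(y_k, \{y_{k-1}, y_{k+1}\})$, whose definition demands that $y_k$ be the unique distinguisher of $y_{k-1}$ and $y_{k+1}$ in $D$.

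The main obstacle is the propagation step. The short cases $n=3$ and $n=4$ are handled directly: for $n=3$ the three triples jointly force $\{y_1, y_2, y_3\}$ to be a set of pairwise false twins in $D$, contradicting point-determination outright; for $n=4$ each pair $\{y_i, y_{i+2}\}$ is simultaneously declared false twins in $D - y_{i-1}$ and in $D - y_{i+1}$, and since the only possible distinguisher from the first triple is $y_{i-1}$ and from the second is $y_{i+1}$, no vertex distinguishes them in $D$, making $\{y_i, y_{i+2}\}$ false twins in $D$. For $n \ge 5$, I chain the ``agree on $y_k$'' conditions coming from the triples at $y_j$ for $j \notin \{k-1, k, k+1\}$ to equate $(y_k, y_{k-1})$ with $(y_k, y_{k+1})$, and a symmetric chain handles the reverse arcs; in each case the failure of $y_k$ to distinguish its green pair is the sought contradiction.
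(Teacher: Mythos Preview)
Your approach is correct in outline and is genuinely different from the paper's. The paper builds a greedy sequence of triples $(v_i,\{u_i,u_{i+1}\})$, $(u_{i+1},\{v_i,v_{i+1}\})$ and then does a case analysis on the first repetition. You instead first prove \emph{uniqueness} of the red triple at each vertex, which yields the clean structural picture of a $2$-regular graph $H$ and reduces everything to a single cycle; this is more conceptual and avoids most of the paper's case analysis. The uniqueness argument via two applications of Lemma~\ref{triples} is nice and fully correct.

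There is, however, one real gap in your propagation step. For a cycle $y_1\cdots y_n$ with $n\ge 5$, the ``agree on $y_k$'' relations you invoke (from the triples at $y_j$ with $j\notin\{k-1,k,k+1\}$) link $y_{j-1}$ with $y_{j+1}$; as an equivalence on $\{y_{k+1},\ldots,y_{k-1}\}$ this has \emph{two} classes, the odd and even offsets from $k$. When $n$ is even, $y_{k+1}$ and $y_{k-1}$ lie in the same class and your chain goes through. When $n$ is odd they lie in different classes, so chaining alone does \emph{not} equate $(y_k,y_{k-1})$ with $(y_k,y_{k+1})$. The fix is immediate from facts you already have: the odd-offset class contains $y_{k-2}$ and the even-offset class contains $y_{k+2}$, and since $\{y_{k-2},y_k\}$ and $\{y_k,y_{k+2}\}$ are green pairs (of the triples at $y_{k-1}$ and $y_{k+1}$) they are independent. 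Hence both chains are identically~$0$, so $y_k$ is non-adjacent in both directions to each of $y_{k-1},y_{k+1}$ and fails to distinguish them, the desired contradiction. With this one-line patch for odd $n$, your proof is complete.
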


Lemma \ref{triples} will be used implicitly a considerable number of times in the following proof.

\begin{proof}
If there are no triples, we are done. Else take a triple $(v_1, \{u_1,u_2\})$. If $u_1$ or $u_2$ is not red in any triple, we are done. Else there are triples $(u_1, \{ v_1, v_2'\})$ and $(u_2,\{v_1,v_2\})$ such that $v_2' \ne u_2$ or $v_2 \ne u_1$, otherwise $\{ v_1, u_1, u_2 \}$ would be an independent set, contradicting that $(v_1, \{u_1,u_2\})$ is a triple.   We will assume without loss of generality that $(u_2, \{v_1, v_2\})$ is a triple with $v_2 \ne u_1$.

If $v_2$ is not red in any triple, we are done.   Else, there exists a triple $(v_2, \{u_2,u_3\})$.   It follows from Lemma \ref{triples} that $u_3 \ne v_1$, and $u_3 \ne u_1$ because otherwise $v_1$ and $v_2$ would distinguish vertices $u_1$ and $u_2$, a contradiction.   If $u_3$ is not red in any triple, we are done.   Else there exists a triple $(u_3, \{ v_2, v_3 \})$.   Again, considering the triple $(u_2, \{ v_1, v_2 \})$ Lemma \ref{triples} implies that $u_3 \ne u_2$.   Also, $v_3 \ne v_1$, otherwise $u_2$ and $u_3$ would distinguish $v_1$ and $v_2$, a contradiction.   If $v_3 = u_1$, then $u_1$ and $v_2$ are non-adjacent by the definition of triple.   Since $\{ v_1, v_2 \}$ is an independent set and $u_3$ is the only vertex distinguishing $u_1$ and $v_2$, then $\{v_1, u_1 \}$ is also an independent set.   Recall that $u_1$ is red in some triple $(u_1, \{v_1, v_2'\})$, hence it is clear that $v_2' \ne u_2$, otherwise $\{ v_1, u_1, u_2 \}$ would be an independent set.   
 Lemma \ref{triples} implies that $v_1 = u_3$ or $v_2' = u_3$.   We have already proved that $u_3 \ne v_1$, so $v_2' = u_3$ and $\{ v_1, u_3 \}$ is an independent set, but $v_2$ is the only vertex distinguishing $u_2$ and $u_3$, so $v_1$ and $u_2$ are non-adjacent and $\{v_1, u_1, u_2\}$ is an independent set, contradicting the definition of triple, therefore $v_3 \ne u_1$.

Again, if $v_3$ is not red in any triple, we are done.   Else, it is clear that we can continue this argument until we find a vertex that is not red in any triple or until we repeat some vertex.   Let us assume that there is no vertex that is not red in any triple, then we have two finite sequences of triples: $\big\{ (v_i, \{u_i, u_{i+1}\}) \big\}$ and $\big\{ (u_{i+1}, \{v_i, v_{i+1}\}) \big\}$.   An argument similar to the previous one shows that a repeated vertex cannot be equal to one of the previous five vertices in the sequence $(u_1, v_1, u_2, v_2, \dots)$.   Also, it is easy to observe using an inductive argument and the definition of triple that the sets $\{ u_1, u_2,  \dots\}$ and $\{v_1, v_2, \dots \}$ are independent in $D$.   We will assume without loss of generality that the first repeated vertex is either $u_{n+1}=u_1$, $u_{n+1} = v_1$, $v_n = u_1$ or $v_n = v_1$.

If the first repeated vertex is $u_{n+1} = u_1$ then, since $v_2$ is the only vertex distinguishing $u_2$ and $u_3$, the adjacency type between $v_1$ and $u_3$ must be the same as the adjacency type between $v_1$ and $u_2$.   An inductive argument shows that $u_2$ has the same adjacency type with respect to every vertex $u_i$ with $2 \le i \le n+1$.   But $u_{n+1} = u_1$, contradicting that $v_1$ distinguishes $u_1$ and $u_2$.

If the first repeated vertex is $u_{n+1} = v_1$ then, Lemma \ref{triples} implies that $v_n = u_1$ or $v_n = u_2$, contradicting that the first repeated vertex is $u_{n+1}$.

If the first repeated vertex is $v_n = v_1$ then, Lemma \ref{triples} implies that $u_n = u_1$ or $u_n = u_2$, contradicting that the first repeated vertex is $v_n$.

For the final case, the first repeated vertex is $v_n = u_1$.   Since $u_1$ is a green vertex of the triples $(v_1, \{ u_1, u_2 \})$ and  $(u_n, \{v_{n-1}, u_1 \})$ and a red vertex in some triple $(u_1, \{ x_1, x_2 \})$, the only possibilities for $x_1$ and $x_2$ according to Lemma \ref{triples} are: $u_n = x_1 = v_1$, or $u_n = x_2 = v_1$, or $x_1 = v_1$ and $x_2 = u_n$, or $x_1 = u_n$ and $x_2 = v_1$.   In the first two cases the choice of $v_n$ as the first repeated vertex would be contradicted.   The third and fourth cases are equivalent, so we will consider only the case $x_1 = v_1$ and $x_2 = u_n$.   Recall that $\{u_1, \dots, u_n\}$ and $\{v_1, \dots, v_n\}$ are independent sets.   In particular $\{ u_1, u_n \}$ and $\{ v_1, v_n \}$ are independent sets, contradicting that $u_1 = v_n$ distinguishes $u_n$ and $v_1$.

Since in every case we reach a contradiction, there must be at least one vertex that is not red in any triple of $D$.
\end{proof}

The proof of Theorem \ref{nott} is now an immediate consequence of our lemmas.

\begin{proof}[Proof of Theorem \ref{nott}]
By the definition of triple, it suffices to choose a vertex that is not red in any triple of $D$ from Lemma \ref{triplesnored}.
\end{proof}


\section{Obstructions without twins}

 The purpose of this section is to prove the the aforementioned bound on the order of a minimal digraph $M$-obstruction without false or true twins, {\em i.e.}, the following theorem.
 
\begin{thm} \label{notwins}
Let $D$ be a minimal $M$-obstruction.   If $D$ contains neither a pair of false twins nor a pair of true twins, then $|V(D)| \le (k+1)(\ell+1)$.
\end{thm}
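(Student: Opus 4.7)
The plan is to apply Theorem~\ref{nott} to $D$, use minimality to extract a full homomorphism $f\colon D-v\to H$, and bound the fibres of $f$ with the no-twins hypothesis.

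Since $D$ has no false twins, $D$ is point-determining; since $D$ has no true twins, the complement of $D$ is point-determining. Applying Theorem~\ref{nott} to $D$ yields a vertex $v$ with $D-v$ still point-determining. By minimality of $D$, there is a full homomorphism $f\colon D-v\to H$. For each $h\in V(H)$, any two vertices of the fibre $P_h:=f^{-1}(h)$ agree on all adjacencies outside $P_h\cup\{v\}$, and their mutual adjacency is dictated by the loop type of $h$; hence they are twins in $D-v$ (false if $h$ is loopless, true if $h$ is looped). Since $D$ has no twins, $v$ must distinguish every pair of vertices of $P_h$, and since a vertex has only four possible adjacency patterns to $v$ (none, in-arc only, out-arc only, or both), this yields $|P_h|\le 4$.

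The heart of the proof is to replace this crude bound by the product estimate $(k+1)(\ell+1)$. The direct sum $|V(D)|\le 1+4(k+\ell)$ is too weak, so one must exploit the fact that $f$ does not extend to $v$: for every potential target $h'\in V(H)$, some fibre $P_h$ contains a vertex whose adjacency pattern to $v$ is incompatible with the entry $(M(h',h),M(h,h'))$. The loop type of $h'$ controls which of the four adjacency patterns between $v$ and a given fibre is \emph{admissible} for the extension $v\mapsto h'$, and by splitting $V(H)$ into its $k$ loopless and $\ell$ looped vertices one hopes to obtain fibre-size bounds roughly of the form $|P_h|\le\ell+1$ for loopless $h$ and $|P_h|\le k+1$ for looped $h$. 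My preferred formulation would be to set up a direct injection from $V(D)\setminus\{v\}$ into the entries of a $(k+1)\times(\ell+1)$ grid indexed (informally) by pairs consisting of a loopless target and a looped target, with the no-twins hypothesis and the minimality of $D$ ruling out collisions; the single spare coordinate in each direction accounts for the isolated non-cross contributions, and $v$ itself absorbs the remaining grid entry.

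The main obstacle is precisely this refinement. The naive per-fibre estimates sum to $2k\ell+k+\ell+1$, which overshoots $(k+1)(\ell+1)=k\ell+k+\ell+1$ by the cross-term $k\ell$, so the counting must charge each vertex of $D\setminus\{v\}$ either to a pure loopless coordinate, a pure looped coordinate, or to exactly one (loopless, looped) pair. I expect this requires either iterating Theorem~\ref{nott} to peel off vertices one at a time and tracking how each fibre shrinks, or exhibiting the grid injection above and verifying that two vertices mapped to the same grid entry would be twins in $D$ or would allow $f$ to be extended to $v$, contradicting the hypothesis or the minimality of $D$.
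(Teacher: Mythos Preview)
Your setup is the right one and matches the paper: pick $v$ via Theorem~\ref{nott}, take an $M$-partition $(V_1,\dots,V_{k+\ell})$ of $D-v$, and observe that vertices in a common part are twins in $D-v$. But you are underselling what this already gives you: since $D-v$ was \emph{chosen} to be point-determining, the loopless fibres satisfy $|V_i|\le 1$ for $1\le i\le k$, not merely $|V_i|\le 4$. So the only issue is the looped parts $V_{k+1},\dots,V_{k+\ell}$, and the real question is what to do when one of these has size $\ge 3$ (if all have size $\le 2$, then $|V(D)|\le k+2\ell+1\le (k+1)(\ell+1)$ once $k\ge 1$; the case $k\ell=0$ is handled separately and trivially).

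This is where your proposal has a genuine gap. The grid-injection idea and the ``iterate Theorem~\ref{nott}'' idea are both speculative, and neither is what actually works. The paper's move is to use minimality a \emph{second} time, but not via Theorem~\ref{nott}: if some looped part, say $V_{k+1}$, has $|V_{k+1}|\ge 3$, pick any $x\in V_{k+1}$ and take an $M$-partition $(X_1,\dots,X_{k+\ell})$ of $D-x$. The point is that $V_{k+1}$ still has at least two vertices $y,z$ left in $D-x$, and these are true twins in $D-v$; one then argues part by part that every $X_j$ is a singleton (using that $x,y$ are true twins in $D-v$, that $v$ must distinguish all pairs in $V_{k+1}$, and the no-twins hypothesis on $D$), yielding $|V(D)|\le k+\ell+1<(k+1)(\ell+1)$. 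The leverage you were looking for comes not from a global counting or injection, but from exploiting a single large fibre to force a much more rigid second partition.
\medskip
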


We will handle the proof of Theorem \ref{notwins} in two cases, each covered by one of the two following lemmas.   The cases are $k \ell=0$ and $k \ell > 0$.

\begin{lem} \label{nottwinszero}
Let $D$ be a minimal $M$-obstruction.   If $D$ does not contain a pair of false twins and $\ell = 0$, then $|V(D)| \le (k+1) = (k+1)(\ell+1)$.   Similarly, if $D$ does not contain a pair of true twins and $k = 0$, then $|V(D)| \le (\ell+1) = (k+1)(\ell+1)$.
\end{lem}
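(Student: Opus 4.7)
The plan is to prove the $\ell=0$ half directly, using Theorem \ref{nott} together with one simple observation, and then obtain the $k=0$ half by passing to the complement. The key observation is that when $\ell=0$, any two vertices in the same part of an $M$-partition are automatically false twins. Indeed, if $V_1,\dots,V_k$ is an $M$-partition and $x,y\in V_i$, then $V_i$ is independent, so $x,y$ are non-adjacent to each other and to any $w\in V_i\setminus\{x,y\}$; and for $w\in V_j$ with $j\ne i$, the adjacencies between $w$ and each of $x,y$ are both dictated by $M(i,j)$ and $M(j,i)$. Hence $x$ and $y$ have identical in- and out-neighbourhoods and form an independent pair, i.e., a pair of false twins.

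With this observation in hand the argument for $\ell=0$ is short. Since $D$ is free of false twins it is point-determining, so Theorem \ref{nott} supplies a vertex $v$ such that $D-v$ is still point-determining. Minimality of $D$ gives an $M$-partition of $D-v$ into the $k$ parts $V_1,\dots,V_k$, and the observation above applied to $D-v$ forces each $V_i$ to contain at most one vertex, for otherwise $D-v$ would harbour a pair of false twins. Consequently $|V(D-v)|\le k$, and hence $|V(D)|\le k+1=(k+1)(\ell+1)$.

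The $k=0$ half is handled by a dual argument. The hypothesis that $D$ has no true twins is equivalent to $\overline{D}$ being point-determining, and since $\overline{D-v}=\overline{D}-v$, Theorem \ref{nott} applied to $\overline{D}$ yields a vertex $v$ such that $D-v$ is still free of true twins. Each part of any $M$-partition of $D-v$ is now a strong clique (all diagonal entries of $M$ equal $1$), and the analogous observation shows that two vertices sharing a part would be true twins of $D-v$. Hence each part is a singleton and $|V(D)|\le\ell+1=(k+1)(\ell+1)$. I expect no real obstacle: once the twin observation is in hand the proof is essentially a one-line application of Theorem \ref{nott}. The only step that needs a moment of care is verifying the twin observation itself, which requires checking adjacencies both inside a part (forced by the common diagonal entry) and between parts (forced by the off-diagonal entries).
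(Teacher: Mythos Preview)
Your proof is correct and follows essentially the same route as the paper: apply Theorem~\ref{nott} to find $v$ with $D-v$ point-determining, take an $M$-partition of $D-v$, and note that when $\ell=0$ each part is a homogeneous independent set so two vertices in a common part would be false twins; the dual case is handled via the complement. The only difference is cosmetic---you spell out the twin observation in more detail than the paper does.
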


\begin{proof}
By Theorem \ref{nott} there is a vertex $v \in V(D)$ such that $D - v$ does not contain a pair of false twins.   But $D$ is a minimal $M$-obstruction, thus, $D-v$ has an $M$-partition $(V_1, \dots, V_k)$.   Since $\ell = 0$, the part $V_i$ is a homogeneous independent set of $D-v$ for every $1 \le i \le k$.   In other words, if $|V_i| > 1$ for some $1 \le i \le k$, then there is at least one pair of false twins in $D-v$, a contradiction.   Therefore, $|V_i| = 1$ and $|V(D)| = k+1$. The similar second statement is proved analogously, by applying Theorem \ref{nott} to the complement of $D$.
\end{proof}

As a consequence of Lemma \ref{nottwinszero}, if $D$ is a minimal $M$-obstruction containing neither a pair of false twins nor a pair of true twins, and $k \ell = 0$, then $|V(D)| \le (k+1)(\ell+1)$.   So, for digraphs without twins, only the case when $k \ell > 0$ remains.   Lemma \ref{notwinsnozero} will cover this case.

\begin{lem} \label{notwinsnozero}
Let $D$ be a minimal $M$-obstruction, and let $k \ell > 0$.   If  $D$ contains neither a pair of false twins nor a pair of true twins, then $|V(D)| \le (k+1)(\ell+1)$.
\end{lem}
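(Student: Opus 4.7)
The plan is to apply Theorem \ref{nott} to obtain a vertex $v \in V(D)$ such that $D - v$ is point-determining. Since $D$ is a minimal $M$-obstruction, $D - v$ admits an $M$-partition $(V_1, \dots, V_{k+\ell})$, where $V_1, \dots, V_k$ are the independent parts and $V_{k+1}, \dots, V_{k+\ell}$ are the strong clique parts. I would then bound the sizes of these parts.

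First, I would show that each independent part has at most one vertex. If $y \neq y'$ both lie in some $V_i$ with $i \leq k$, then $y, y'$ are non-adjacent and share in- and out-neighborhoods in $D - v$, since $V_i$ is completely adjacent or completely non-adjacent to every other part and has no internal arcs. So $y, y'$ would be false twins in $D - v$, contradicting the point-determining property. Next, I would bound the clique parts. Any two vertices in $V_j$ (for $j \geq k+1$) are joined by both arcs and share adjacencies to every other part, hence are true twins in $D - v$. Since $D$ has no true twins, $v$ must distinguish each such pair in $D$. Associating to each $y \in V_j$ the ordered pair recording whether $(y,v)$ and $(v,y)$ are arcs of $D$ gives an injection $V_j \to \{0,1\}^2$, yielding the crude bound $|V_j| \leq 4$.

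These bounds combine to $|V(D)| \leq 1 + k + 4\ell$, which is at most $(k+1)(\ell+1)$ whenever $k \geq 3$. The symmetric argument obtained by applying Theorem \ref{nott} to the complement of $D$ (point-determining since $D$ has no true twins) gives $|V(D)| \leq 1 + 4k + \ell$, handling $\ell \geq 3$. It remains to treat the finitely many cases with $k, \ell \in \{1, 2\}$, for which I would sharpen the clique-part bound to $|V_j| \leq k+1$; this gives
\[
|V(D)| \leq 1 + k + \ell(k+1) = (k+1)(\ell+1).
\]

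To establish this sharper bound in the residual cases, I would argue by contradiction: assuming some clique part has $|V_j| \geq k+2$, the particular pattern of $v$-types forced on $V_j$ (by the injectivity above, combined with the few available types) lets us exploit the minimality of $D$. Specifically, for an appropriately chosen vertex $w \in V(D)$ (for instance, an independent singleton $u_i$, or a suitable vertex of $V_j$), the digraph $D - w$ must admit an $M$-partition, but we show it does not. The main obstacle is this finite case analysis: in each of the four cases $(k,\ell) \in \{(1,1),(1,2),(2,1),(2,2)\}$ one must enumerate the candidate $M$-partitions of $D - w$ and verify that each is ruled out by either a within-part constraint (an independent set failing independence, or a strong clique failing strong-cliqueness) or a between-parts uniform-adjacency constraint. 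Once this is done, the contradiction yields $|V_j| \leq k+1$, and the stated bound on $|V(D)|$ follows.
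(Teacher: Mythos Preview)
Your opening matches the paper exactly: pick $v$ via Theorem~\ref{nott}, take an $M$-partition of $D-v$, and observe that the independent parts are singletons. Your crude bound $|V_j|\le 4$ via the $v$-type injection $V_j\to\{0,1\}^2$ is correct, and combining it with the complementary application of Theorem~\ref{nott} to handle $\ell\ge 3$ is a pleasant symmetry the paper does not use.

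The gap is in the residual cases $k,\ell\in\{1,2\}$. First, the argument there is only sketched, not carried out; you acknowledge the ``main obstacle'' is an unexecuted four-way case analysis. Second, the plan you describe is logically off: you write that for a suitable $w$ ``the digraph $D-w$ must admit an $M$-partition, but we show it does not.'' By minimality of $D$, every $D-w$ \emph{does} admit an $M$-partition, so this cannot be the route to a contradiction. What one must do instead is \emph{take} such a partition of $D-w$ and argue from its structure (together with the no-twins hypotheses) to a contradiction, or to a strong enough size bound. As it stands, the target inequality $|V_j|\le k+1$ is asserted but not proved for these small parameters.

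For comparison, the paper avoids any small-case analysis by a uniform dichotomy. After showing $|V_i|=1$ for $i\le k$, it splits on whether every clique part satisfies $|V_j|\le 2$. If so, $|V(D)|\le k+2\ell+1\le(k+1)(\ell+1)$ for all $k\ge 1$. If not, it picks $x$ in a clique part of size $\ge 3$, takes an $M$-partition $(X_1,\dots,X_{k+\ell})$ of $D-x$, and proves (Claims~\ref{C1}--\ref{C3}) that \emph{every} $X_j$ is a singleton, giving $|V(D)|\le k+\ell+1$. This second-partition argument is the missing idea in your proposal; it replaces your $|V_j|\le 4$ by a sharper global conclusion and removes the need for any parameter-by-parameter enumeration.
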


\begin{proof}
Let $v \in V(D)$ be a vertex such that $D-v$ does not contain a pair of false twins.   Let $(V_1, \dots, V_{k+\ell})$ be an $M$-partition of $D$.   Recalling that $V_i$ is a homogenous independent set for every $1 \le i \le k$, it is clear that $|V_i| = 1$ for every $1 \le i \le k$. 

If $|V_i| \le 2$ for every $k+1 \le i \le k+\ell$, then
\begin{IEEEeqnarray*}{rCl}
|V(D)| & = & \sum_{i=1}^{k+\ell} |V_i| + 1 \\
& = &  \sum_{i=1}^{k} |V_i| + \sum_{i=k+1}^{k+\ell} |V_i| + 1 \\
& \le &  k + 2\ell + 1.
\end{IEEEeqnarray*}
Since $k > 0$, we can conclude that $k+2\ell+1 \le (k+1)(\ell+1)$.

Else, $|V_i| \ge 3$ for some $k+1 \le i \le k+\ell$.   Let $x$ be a vertex in $V_i$ and let $(X_1, \dots, X_{k+\ell})$ be an $M$-partition of $D-x$.   Then the following claims hold.

\begin{clm} \label{C1}
For every $X_j$ such that $X_j \cap (V_i \cup \{ v \}) = \varnothing$ we have $|X_j| \le 1$ .
\end{clm}

\begin{clm} \label{C2}
If $v \in X_j$ for some $1 \le j \le \ell+1$, then $|X_j| \le 1$ .
\end{clm}

\begin{clm} \label{C3}
If $z \in X_j$ for some $z \in V_i$ and some $1 \le j \le \ell+1$, then $|X_j| \le 1$.
\end{clm}

It will follow from Claims \ref{C1}, \ref{C2} and \ref{C3} that $|V(D)| \le k+\ell+1 < (k+1)(\ell+1)$.   Now we prove the claims.

\begin{proof}[Proof of Claim \ref{C1}]
Let us assume without loss of generality that $x, y \in V_{k+1}$.   Suppose that, for some $1 \le i \le k+\ell$, $X_i \cap (V_{k+1} \cup \{ v \}) = \varnothing$ and $|X_i| \ge 2$.   Thus, $X_i$ is either an homogeneous independent set or an homogeneous strong clique in $D-x$.   But this would mean that $y$ does not distinguish the vertices in $X_i$ and, since $x$ and $y$ are true twins in $D-v$, $x$ would not distinguish the vertices in $X_i$ either.   Hence $X_i$ would contain either a pair of false twins or a pair of true twins of $D$, a contradiction.
\renewcommand{\qedsymbol}{$\blacksquare$}
\end{proof}

\begin{proof}[Proof of Claim \ref{C2}]
Let us assume without loss of generality that $x, y, z \in V_{k+1}$ and $v \in X_j$.   Then it is clear that $v$ must distinguish all vertices in $V_{k+1}$, otherwise a pair of true twins would exist in $D$.   Hence, at most one vertex in $V_{k+1}$ is non-adjacent to and from $v$; let us assume without loss of generality that $v$ is adjacent to $y$ and $z$ in $D$.   We will consider two cases.

Since $v$ is adjacent to $y$, if $y \in X_j$, then $\{v,y\}$ is a strong clique.   But $\{y, z\}$ is also a strong clique.   Since $v$ and $y$ are false twins in $D-x$, we can conclude that $\{v, y, z\}$ is a strong clique in $D$, contradicting the fact that $v$ distinguishes $y$ and $z$.   A similar argument can be followed if another member of $V_{k+1}$ belong to $X_j$.

If $V_{k+1} \cap X_j = \varnothing$, then there is a vertex $u \in V(D) \setminus (V_{k+1} \cup \{v\})$ in $X_j$.   Thus, since $x$ and $y$ are true twins in $D-v$, $u$ does not distinguish $x$ and $y$, but $v$ does; a contradiction.   
\renewcommand{\qedsymbol}{$\blacksquare$}
\end{proof}

\begin{proof}[Proof of Claim \ref{C3}]
Suppose that $y \in X_j$.   Clearly, $v \notin X_j$, so $z \notin X_j$, otherwise $v$ would not distinguish $y$ and $z$.   Hence, if $u \in X_j$, then $u \notin (A_{k+1} \cup \{v\})$.   If $1 \le j \le k$, then $u$ and $y$ are false twins in $D-x$ and  hence $z$ does not distinguish them.   Since $y$ and $z$ are true twins in $D-v$ and $\{y, u\}$ is an independent set, then $\{z, u\}$ is also an independent set.    So, $z$ distinguishes $y$ and $u$ in $D-x$, a contradiction.   If $k+1 \le j \le k+\ell$, then $u$ and $y$ are true twins in $D-x$.   But $x$ and $y$ are true twins in $D-v$, hence $u$ does not distinguish $x$ and $y$, which implies that $\{x, y, u\}$ is a strong clique in $D$.   Thus, $x$ does not distinguish $y$ and $u$ in $D$, and hence $y$ and $u$ are true twins in $D$, a contradiction.   We conclude that $X_j = \{y\}$.
\renewcommand{\qedsymbol}{}
\end{proof}
\renewcommand{\qedsymbol}{$\blacksquare \square$}
\end{proof}

Now, Theorem \ref{notwins} follows from Lemmas \ref{nottwinszero}, and \ref{notwinsnozero}.


\section{Obstructions with twins}

We begin this section establishing some auxiliary facts.

\begin{lem} \label{hc}
Let $D$ be a digraph and let $S$ be a homogeneous set of $D$.   Also, let $v \in D$ and $(V_1, \dots, V_{k+\ell})$ be an $M$-partition of $D-v$ such that $S$ intersects $V_i$.    If $S$ is a strong clique and $k+1 \le i \le k+\ell$, then $(V_1, \dots, V_k,  \dots, V_i \cup \{ v \}, \dots, V_{k+\ell})$ is an $M$-partition of $D$.   Similarly, if $S$ is independent and $1 \le i \le k$, then $(V_1, \dots, V_i \cup \{ v \}, \dots, V_{k+1}, \dots, V_{k+\ell})$ is an $M$-partition of $D$.
\end{lem}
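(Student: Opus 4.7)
The plan is to verify directly that replacing $V_i$ by $V_i \cup \{v\}$ preserves every constraint of an $M$-partition. Because the other parts are unchanged, only two things need rechecking: the internal condition on $V_i \cup \{v\}$ (determined by $M(i,i)$), and, for each $j$, the adjacency pattern between $v$ and $V_j$ (determined by $M(i,j)$ and $M(j,i)$).

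I treat the strong-clique case (so $S$ is a strong clique and $k+1 \le i \le k+\ell$); the independent case is dual. I read the hypothesis as $v \in S$, since otherwise homogeneity of $S$ tells us nothing about $v$'s adjacencies. Fix $u \in S \cap V_i$, and for each $w \in V(D) \setminus \{v\}$ split on whether $w \in S$.

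If $w \in S$, then $\{v, w\}$ lies inside the strong clique $S$, so both arcs between $v$ and $w$ are present in $D$. This immediately gives the correct adjacency of $v$ to every element of $V_i \cap S$. If moreover $w \in V_j \cap S$ with $j \neq i$, then $u$ and $w$ (both in $S$) are adjacent in both directions, which forces $M(i,j) = M(j,i) = 1$ by validity of the partition on $D-v$; so the strong-clique adjacency of $v$ and $w$ matches the matrix. If $w \notin S$, homogeneity of $S$ means $v$ has the same in- and out-adjacency to $w$ as $u$ does. Since $u \in V_i$ and $w \in V_j$ already realize the relationship dictated by $M(i,j)$ and $M(j,i)$, so do $v$ and $w$.

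The argument for $S$ independent and $1 \le i \le k$ is identical with \emph{strong clique} replaced by \emph{independent set} and adjacency exchanged with non-adjacency throughout. No step is genuinely delicate; the only thing to watch is the observation in the first case that two elements of $S$ lying in distinct parts $V_i, V_j$ force $M(i,j) = M(j,i) = 1$, which is what makes the two cases consistent rather than competing.
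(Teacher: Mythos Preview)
Your proof is correct and rests on the same observation as the paper's: since $u \in S \cap V_i$ and $v$ both lie in the homogeneous strong clique $S$, they are true twins, so $v$ inherits all of $u$'s adjacency constraints and is additionally strongly adjacent to $u$ itself. The paper packages this as a swap (replace $u$ by $v$ to get an $M$-partition of $D-u$, then put $u$ back into $V_i \cup \{v\}$), whereas you verify the constraints directly by splitting on $w \in S$ versus $w \notin S$; the content is the same, and your reading of the implicit hypothesis $v \in S$ is the intended one.
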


\begin{proof}
Let $u$ be arbitrarily chosen in $S \cap V_i$ and let $V_i'$ be defined as $V_i' = (V_i \setminus \{ u \}) \cup \{ v \}$.   Since $u$ and $v$ are true twins, $(V_i, \dots, V_i', \dots, V_{k+\ell})$ is an $M$-partition of $D-u$.   Recalling that $(u,v), (v,u) \in A(D)$, from here it is easy to observe that $(V_1, \dots, V_k,  \dots, V_i \cup \{ v \}, \dots, V_{k+\ell})$ is an $M$-partition of $D$.
\end{proof}

\begin{lem} \label{boundsc}
Let $D$ be a minimal $M$-obstruction.   If $S$ is an homogeneous strong clique of $D$, then $|S| \le k+1$.   Similarly, if $S$ is a homogeneous independent set of $D$, then $|S| \le \ell+1$.
\end{lem}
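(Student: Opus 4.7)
The plan is to prove both statements by contradiction, using the minimality of $D$ together with Lemma \ref{hc} after a pigeonhole argument. I focus on the strong clique case; the independent set case is entirely dual.

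Suppose, for contradiction, that $S$ is a homogeneous strong clique of $D$ with $|S| \ge k+2$. Pick any $v \in S$. Because $D$ is a minimal $M$-obstruction, $D - v$ admits an $M$-partition $(V_1, \dots, V_{k+\ell})$. Now I would use the structural observation that drives the argument: since any two vertices of $S$ are joined by arcs in both directions, no part $V_i$ with $M(i,i) = 0$, i.e.\ with $1 \le i \le k$, can contain two vertices of $S$. Consequently, the $k$ independent parts together account for at most $k$ vertices of $S \setminus \{v\}$.

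Since $|S \setminus \{v\}| \ge k+1$, there must therefore exist a vertex $u \in S \setminus \{v\}$ lying in some part $V_i$ with $k+1 \le i \le k+\ell$. Thus $S$ is a homogeneous strong clique intersecting a clique part of the $M$-partition of $D - v$, and $v \in S$ (so $v$ and $u$ are true twins in $D$), which is exactly the hypothesis needed to apply Lemma \ref{hc}. Invoking that lemma, $(V_1, \dots, V_i \cup \{v\}, \dots, V_{k+\ell})$ is an $M$-partition of $D$, contradicting the fact that $D$ is an $M$-obstruction. Hence $|S| \le k+1$.

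For the independent-set statement, I would run the symmetric argument: assume $|S| \ge \ell + 2$ for a homogeneous independent set $S$, pick $v \in S$ and an $M$-partition of $D - v$. A clique part can contain at most one vertex of $S$ (two vertices of $S$ are non-adjacent), so the $\ell$ clique parts account for at most $\ell$ of the $\ge \ell + 1$ vertices of $S \setminus \{v\}$; hence some $u \in S \setminus \{v\}$ lies in an independent part $V_i$ with $1 \le i \le k$, and the second half of Lemma \ref{hc} gives an $M$-partition of $D$, a contradiction.

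I do not expect a real obstacle here: once Lemma \ref{hc} is in hand, the only step requiring care is the pigeonhole count, specifically verifying that a strong clique meets each independent part in at most one vertex (and dually for independent sets and clique parts). Everything else is bookkeeping.
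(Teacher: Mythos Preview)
Your argument is correct and matches the paper's proof essentially step for step: assume $|S|\ge k+2$, delete a vertex $v\in S$, take an $M$-partition of $D-v$, observe each independent part meets $S$ in at most one vertex, apply pigeonhole to force a vertex of $S$ into a clique part, and invoke Lemma~\ref{hc} for the contradiction. The only cosmetic difference is that the paper singles out the case $\ell=0$ (where the pigeonhole count already yields a contradiction before any clique part is reached), whereas you fold it into the same sentence; this is harmless.
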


\begin{proof}
Suppose that $|S| \ge k+2$.   Since $D$ is a minimal $M$-obstruction, for any $v \in S$, it must be the case that $D-v$ has an $M$-partition $(V_1, \dots, V_k, V_{k+1}, \dots, V_{k+\ell})$. Since $V_i$ is an independent set for every $1 \le i \le k$, and $V_i$ is a strong clique for every $k+1 \le i \le k+\ell$, we have $|S \cap V_i| \le 1$ for every $1 \le i \le k$.   If $\ell = 0$ this is not possible, otherwise it follows from the Pigeonhole Principle that there exists $1 \le i \le \ell$ such that $S \cap V_{k+i} \ne \varnothing$.   Therefore by Lemma \ref{hc} $(V_1, \dots, V_k,  \dots, V_{k+i} \cup \{ v \}, \dots, V_{k+\ell})$ is an $M$-partition of $D$, a contradiction.
\end{proof}

We now proceed to prove the following theorem.

\begin{thm} \label{twins}
Let $D$ be a minimal $M$-obstruction.  If $D$ contains a pair of false twins or a pair of true twins, then $|V(D)| \le (k+1)(\ell+1)$.
\end{thm}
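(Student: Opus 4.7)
The plan is to reduce Theorem \ref{twins} to the no-twins case handled by Theorem \ref{notwins}, using the structural bounds provided by Lemmas \ref{hc} and \ref{boundsc}. By passing to the complement (which converts a minimal $M$-obstruction into a minimal $\bar M$-obstruction, interchanges $k$ and $\ell$, and swaps true and false twins), I may assume without loss of generality that $D$ contains a pair of false twins. Let $S$ be a maximal set of pairwise false twins of $D$ containing this pair; $S$ is then a homogeneous independent set, so Lemma \ref{boundsc} gives $|S| \le \ell+1$.

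Next, fix $s_0 \in S$ and consider the proper induced subgraph $D' = D - (S \setminus \{s_0\})$. By minimality of $D$, $D'$ admits an $M$-partition $(V_1, \dots, V_{k+\ell})$. Since $S$ is homogeneous and independent, Lemma \ref{hc} shows that if $s_0$ lay in an independent part $V_i$ (with $i \le k$), then augmenting $V_i$ with $S \setminus \{s_0\}$ would give an $M$-partition of $D$, contradicting the obstruction property; hence $s_0$ lies in some clique part $V_j$ with $j \ge k+1$.

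The next step is to bound the sizes of the parts. For $i \ne j$, the part $V_i$ is disjoint from $S$, and one checks that it is homogeneous in $D$: it is homogeneous in $D'$ by the partition, and $S$ is homogeneous toward $V_i$ by construction. Hence Lemma \ref{boundsc} gives $|V_i| \le \ell+1$ when $i \le k$, and $|V_i| \le k+1$ when $i \ge k+1$ and $i \ne j$. For the exceptional part $V_j$, one shows that $V_j \setminus \{s_0\}$ consists of pairwise true twins of $D$: $V_j$ is a homogeneous strong clique in $D'$, and every member of $S \setminus \{s_0\}$ is completely adjacent to $V_j \setminus \{s_0\}$ by homogeneity of $S$, so no vertex of $D$ distinguishes them. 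Thus $V_j \setminus \{s_0\}$ lies in a true twin class of $D$, and by Lemma \ref{boundsc} we get $|V_j \setminus \{s_0\}| \le k+1$.

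The main obstacle will be the final counting. A naive summation of the bounds above yields only $|V(D)| \le 2k\ell + k + 2\ell + 1$, which exceeds the target $(k+1)(\ell+1) = k\ell + k + \ell + 1$ by $\ell(k+1)$: the parts cannot simultaneously attain their maximum sizes. To close the gap I would refine the analysis by choosing $S$ to be a twin class of \emph{maximum} size among all twin classes of $D$ (whence the homogeneous independent sets appearing inside the other $V_i$ have size at most $|S|$) and by treating the compound block $V_j \cup S$, of combined size at most $(k+1)+(\ell+1)-1 = k+\ell+1$, as jointly occupying the budget of one clique part and one independent part. Combining this refinement with the remaining $k-1$ independent parts of size $\le \ell+1$ and $\ell-1$ clique parts of size $\le k+1$ should deliver the sharp bound, with the delicate combinatorial bookkeeping being the main difficulty of the proof.
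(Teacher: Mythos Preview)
Your setup is sound---the complementation, the choice of a maximal false-twin set $S$, the argument that $s_0$ must land in a clique part $V_j$, and the homogeneity in $D$ of the remaining parts are all correct---but the counting genuinely fails and your proposed refinement does not repair it. In the partition of $D' = D - (S\setminus\{s_0\})$ there are still $k$ independent parts, not $k-1$; the set $S$ occupies no independent slot (only $s_0$ appears, and it sits in $V_j$). So your ``budget'' $(k+\ell+1) + (k-1)(\ell+1) + (\ell-1)(k+1)$ both miscounts the independent parts and, even as written, sums to $2k\ell + k + \ell - 1$, which exceeds $(k+1)(\ell+1)$ whenever $k\ell > 2$. Choosing $S$ of maximum size among all twin classes does not help either: it bounds each homogeneous part by $s=|S|$, yielding estimates of the shape $(k+1)s + \ell t$ that are still too large in general. (Incidentally, $|V_j\cup S|\le k+\ell+2$, not $k+\ell+1$: the vertex $s_0$ is \emph{not} a true twin of the rest of $V_j$, since members of $S\setminus\{s_0\}$ are adjacent to $V_j\setminus\{s_0\}$ but not to $s_0$.) Finally, you announce a reduction to Theorem~\ref{notwins} but never invoke it.

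The paper's argument is structurally different, and the difference is what makes the arithmetic close. It removes only \emph{one} vertex $v$ from a maximum homogeneous strong clique $S$ of size $k+1-n$. The remaining $k-n$ vertices of $S$ are then forced into $k-n$ distinct independent parts $V_1,\dots,V_{k-n}$; the clique structure of $S$ forces $M_{ij}=1$ for all distinct $i,j\le k-n$, and this in turn forces $|V_i|=1$ for those $i$ (a second vertex would contradict the homogeneity of $S$). Meanwhile each of the $\ell$ clique parts is a homogeneous strong clique of $D$ and hence has size at most $k+1-n$ by maximality of $S$. The resulting sum $(k-n)\cdot 1 + n(\ell+1) + \ell(k+1-n) + 1$ telescopes exactly to $(k+1)(\ell+1)$. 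The two effects your approach misses---(i) the forced singleton independent parts from the spread of $S\setminus\{v\}$, and (ii) the shrinkage of every clique part from $k+1$ down to $|S|$---balance each other precisely, and neither is visible once you delete $|S|-1$ vertices simultaneously.
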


First we handle the case when either a homogeneous strong clique or a homogeneous independent set of size at least three exists in $D$.

\begin{lem}\label{k+1sc}
Let $D$ be a minimal $M$-obstruction.    If $k \ge 2$ and $D$ contains an homogeneous strong clique $S$ with $|S| = k+1$, then $|V(D)| \le (k+1)(l+1)$.   Similarly, if $\ell \ge 2$ and $D$ contains an homogeneous independent set $S$ with $|S| = k+1$, then $|V(D)| \le (k+1)(l+1)$.
\end{lem}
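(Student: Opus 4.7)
The plan is to pick any $v \in S$ and, by minimality of $D$, take an $M$-partition $(V_1,\dots,V_{k+\ell})$ of $D-v$. Since $S$ is a homogeneous strong clique of $D$, Lemma \ref{hc} forbids $S\setminus\{v\}$ from meeting any $V_i$ with $i>k$, for otherwise the partition would extend to an $M$-partition of $D$. Each independent part $V_i$ with $i\le k$ can contain at most one vertex of the clique $S$, and since $|S\setminus\{v\}|=k$, exactly one such vertex $s_i\in V_i\cap S$ lies in each. Because $S$ is a strong clique, this forces $M(i,j)=M(j,i)=1$ for all distinct $i,j\in\{1,\dots,k\}$.

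The heart of the argument is to show that $|V_i|=1$ for every $i\le k$, and this is where $k\ge 2$ enters. Suppose, for contradiction, $u\in V_i\setminus\{s_i\}$ for some $i\le k$. Since $V_i$ is an independent set, $u$ and $s_i$ are non-adjacent in both directions, so by the homogeneity of $S$ the vertex $u$ has no arc in either direction with any vertex of $S$. Pick $j\in\{1,\dots,k\}$ with $j\ne i$ (using $k\ge 2$); the entry $M(i,j)=1$ then forces $u$ to be completely adjacent to $s_j\in S$, a contradiction. Hence $V_i=\{s_i\}$ for every $i\le k$.

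For $i>k$, I will argue that $V_i$ is a homogeneous strong clique of $D$ itself and then apply Lemma \ref{boundsc} to get $|V_i|\le k+1$. The only new vertex to check for homogeneity is $v$: any $u\in V_i$ lies outside $S$, and its adjacency to $s_j$ is prescribed by $M(i,j)$ for each $j\le k$; by the homogeneity of $S$ this value must be independent of $j$, so every $u\in V_i$ has the same adjacency pattern to $S$, and in particular the same adjacency to $v$. Putting everything together,
\begin{equation*}
|V(D)| \;=\; 1 + \sum_{i=1}^{k}|V_i| + \sum_{i=k+1}^{k+\ell}|V_i| \;\le\; 1 + k + \ell(k+1) \;=\; (k+1)(\ell+1).
\end{equation*}

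The dual statement for a homogeneous independent set $S$ with $\ell\ge 2$ will follow by the same template applied to the independent-set half of Lemma \ref{hc}, with the roles of the two diagonal blocks of $M$ interchanged: $S\setminus\{v\}$ is trapped inside $V_{k+1}\cup\dots\cup V_{k+\ell}$ one vertex per part, forcing $M(i,j)=0$ for distinct $i,j>k$, and the same kind of contradiction collapses each clique part to a singleton. The main obstacle I foresee is justifying that the clique parts $V_i$ with $i>k$ remain homogeneous in all of $D$, not merely in $D-v$, so that Lemma \ref{boundsc} applies; the argument above handles this once one knows $V_j\cap S=\{s_j\}$ for each $j\le k$.
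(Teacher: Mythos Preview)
Your argument is correct and follows essentially the same route as the paper: remove $v\in S$, observe that $S\setminus\{v\}$ must hit each of $V_1,\dots,V_k$ exactly once (forcing $M(i,j)=1$ there and collapsing those parts to singletons via homogeneity of $S$ and $k\ge 2$), then bound each $V_i$ with $i>k$ by $k+1$ using Lemma~\ref{boundsc}. The one cosmetic difference is how you certify that $V_i$ for $i>k$ is homogeneous in all of $D$: the paper swaps $v$ for its twin $u\in S$ to exhibit an $M$-partition of $D-u$ containing $v$, whereas you argue directly that the adjacency of any $u\in V_i$ to $v$ matches its adjacency to $s_1$ by homogeneity of $S$ --- both are valid and equally short.
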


\begin{proof}
Let $v \in S$ be an arbitrarily chosen vertex.   Since $D$ is a minimal $M$-obstruction, $D-v$ has an $M$-partition $(V_1, \dots, V_{k+\ell})$.   By virtue of Lemma \ref{hc}, $S \cap V_{k+i} = \varnothing$ for every $1 \le i \le \ell$.   Recalling that $V_i$ is an independent set for every $1 \le i \le k$ it is clear that for every such $i$, $|S \cap V_i| = 1$.   Hence, it follows from the fact that $S$ is a strong clique that  $M_{ij} = 1$ for every pair of distinct integers $i, j$ such that $1 \le i, j \le k$. Since $S$ is a homogeneous set, it is easy to argue that $|V_i| = 1$ for every $1 \le i \le k$.

It is clear that $V_{k+i}$ is a strong homogeneous clique of $D-v$ for every $1 \le i \le \ell$.   Since $k \ge 1$, vertex $v$ has at least one twin $u \in S$.   We can assume without loss of generality that $V_1 = \{ u \}$, and hence, $(\{v\}, V_2, \dots, V_{k+\ell})$ is an $M$-partition of $D-u$.   It follows from this observation that $V_{k+i}$ is in fact an homogeneous strong clique of $D$, and we obtain from Lemma \ref{boundsc} that $|V_{k+i}| \le k+1$ for every $1 \le i \le \ell$.   Thus, \begin{IEEEeqnarray*}{rCl}
|V(D)| & = & \sum_{i=1}^{k+\ell} |V_i| + 1 \\
& = & \sum_{i=1}^k |V_i| + \sum_{i=k+1}^{k+\ell} |V_i| + 1 \\
& \le &  k + \ell(k+1) + 1 \\
& = &  k \ell + k + \ell + 1\\
& = &  (k+1)(\ell + 1).
\end{IEEEeqnarray*}
\end{proof}

From here it is easy to prove our bound for the size of a minimal $M$-obstruction, provided that it has a sufficiently large homogeneous strong clique.

\begin{lem}\label{gen}
Let $D$ be a minimal $M$-obstruction.   If $D$ contains a homogeneous strong clique $S$ with at least three vertices, then $|V(D)| \le (k+1)(\ell +1)$.   Similarly, if $D$ contains a homogeneous independent set $S$ with at least three vertices, then $|V(D)| \le (k+1)(\ell +1)$.
\end{lem}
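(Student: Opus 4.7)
The plan is to prove the homogeneous strong clique case; the independent-set case will follow by passing to the complement digraph, which swaps the roles of $k$ and $\ell$ and preserves $(k+1)(\ell+1)$. My first move is to replace $S$ by a \emph{maximum} homogeneous strong clique $\hat S$ of $D$, so that $s := |\hat S| \ge |S| \ge 3$, and by Lemma \ref{boundsc} also $s \le k+1$ (in particular $k \ge 2$). If $s = k+1$ the bound is immediate from Lemma \ref{k+1sc}, so I may assume $3 \le s \le k$.

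Next, pick $v \in \hat S$ and let $(V_1, \dots, V_{k+\ell})$ be an $M$-partition of $D-v$ (guaranteed by minimality). By Lemma \ref{hc}, no vertex of $\hat S \setminus \{v\}$ can lie in any strong-clique part $V_i$ with $i \ge k+1$; and since $\hat S \setminus \{v\}$ is a strong clique while the first $k$ parts are independent, its $s-1$ elements occupy $s-1$ distinct independent parts. After relabelling I can write $\hat S \setminus \{v\} = \{u_1, \dots, u_{s-1}\}$ with $u_i \in V_i$. I then aim to show (a) $|V_i| = 1$ for $1 \le i \le s-1$; (b) every $V_i$ with $s \le i \le k+\ell$ is homogeneous in $D$; and consequently (c) $|V_i| \le \ell+1$ for $s \le i \le k$ (by Lemma \ref{boundsc}) and $|V_i| \le s$ for $k+1 \le i \le k+\ell$ (by the maximality of $\hat S$, since each such $V_i$ is a homogeneous strong clique of $D$).

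Claim (a) will follow from homogeneity of $\hat S$: any $u' \in V_i \setminus \{u_i\}$ would lie outside $\hat S$ (as $V_i$ is independent), be non-adjacent to $u_i$, yet completely adjacent to every other $u_j$ with $j \in \{1, \dots, s-1\} \setminus \{i\}$ (because $M(i,j) = M(j,i) = 1$ there); the hypothesis $s \ge 3$ supplies such a $u_j$, contradicting homogeneity. Claim (b) will use that each $V_i$ is already homogeneous in $D-v$, and that $v$, being a true twin of $u_1$ in $D$ (since both belong to the homogeneous strong clique $\hat S$), inherits $u_1$'s uniform relationship to $V_i$. Summing the part bounds yields
\[
|V(D)| \;\le\; 1 + (s-1) + (k-s+1)(\ell+1) + \ell s \;=\; k\ell + k + \ell + 1 \;=\; (k+1)(\ell+1).
\]
The main obstacle is item (c) for the strong-clique parts: using only Lemma \ref{boundsc} one gets the looser bound $|V_i| \le k+1$, leaving an unwanted surplus of $\ell(k-s+1)$ above $(k+1)(\ell+1)$; it is precisely the choice of $\hat S$ as a \emph{maximum} homogeneous strong clique that tightens this to $|V_i| \le s$ and closes the arithmetic.
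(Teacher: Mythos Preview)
Your proposal is correct and follows essentially the same route as the paper: take a maximum homogeneous strong clique of size $s$, delete one vertex, use Lemma~\ref{hc} to force its remaining $s-1$ vertices into distinct singleton independent parts, observe that all other parts are homogeneous in $D$, and bound them by $\ell+1$ (independent parts, via Lemma~\ref{boundsc}) and by $s$ (strong-clique parts, via maximality of $\hat S$) to get exactly $(k+1)(\ell+1)$. The paper phrases the same argument as an induction on $n = k+1-s$ with Lemma~\ref{k+1sc} as the base case, but the ``induction hypothesis'' it invokes is really just the maximality of the chosen clique, exactly as you use it.
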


\begin{proof}
Let $k+1-n$ be the size of a largest homogeneous strong clique of $D$.   Proceeding by induction on $n$, it is clear that Lemma \ref{k+1sc} is the base case ($n = 0$).

Let $S$ be a homogeneous strong clique of $D$ of size $3 \le k+1-n < k+1$ and $v \in S$ an arbitrarily chosen vertex.   Since $D$ is a minimal $M$-obstruction, $D-v$ has an $M$-partition $(V_1, \dots, V_{k+\ell})$.   By virtue of Lemma \ref{hc}, $S \cap V_{k+i} = \varnothing$ for every $1 \le i \le \ell$.   Also, recalling that $V_i$ is an independent set for every $1 \le i \le k$ it is clear that for every such $i$, $|S \cap V_i| \le 1$.

Also, $2 \le |S \setminus \{ v \}| = k-n$, hence, we can assume without loss of generality that $|S \cap V_i| = 1$ for $1 \le i \le k-n$.   But $S$ is a strong clique of $D$, so this implies that $M_{ij} = 1$ for every pair of distinct integers $i, j$ such that $1 \le i, j \le k-n$.   Thus, $|V_i| = 1$ for $1 \le i \le k-n$, otherwise, there would be at least one vertex $u \in V_i \setminus S$ which is non-adjacent to one vertex in $S$ and adjacent to at least one vertex in $S$ (because $2 \le k-n$), contradicting the fact that $S$ is an homogeneous set of $D$.   Since $S$ is a homogeneous set and $S \setminus \{ v \} \ne \varnothing$, it is easy to observe that $V_i$ is a homogeneous independent set of $D$ for every $1 \le i \le k$ and a homogeneous strong clique of $D$ for every $k+1 \le i \le k+ \ell$.   Hence, using Lemma \ref{boundsc} and the induction hypothesis we obtain the following.  
\begin{IEEEeqnarray*}{rCl}
|V(D)| & = & \sum_{i=1}^{k+\ell} |V_i| + 1 \\
& = & \sum_{i=1}^{k-n} |V_i| + \sum_{i=k+1-n}^{k} |V_i| + \sum_{i=k+1}^{k+\ell} |V_i| + 1 \\
& \le &  k-n + n (\ell + 1) + \ell(k+1-n) + 1 \\
& = &  k \ell + k + \ell + 1\\
& = &  (k+1)(\ell + 1).
\end{IEEEeqnarray*}

\end{proof}

It remains to prove the same bound for every minimal $M$-obstruction such that both the size of a largest homogeneous independent set and the size of a largest homogeneous strong clique are less than or equal to $2$.

\begin{lem}\label{k=1-sc=2}
Let $D$ be a minimal $M$-obstruction.   If $k=1$ and there exists a pair of true twins in $D$, then $|V(D)| \le 2(\ell +1) = (k+1)(\ell+1)$.   Similarly, if $\ell=1$ and there exists a pair of false twins in $D$, then $|V(D)| \le 2(k +1) = (k+1)(\ell+1)$.
\end{lem}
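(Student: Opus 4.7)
The plan is to mimic the proof of Lemma \ref{k+1sc}: pick $v$ to be one member of the given pair $\{u,v\}$ of true twins of $D$, take an $M$-partition $(V_1, V_2, \dots, V_{\ell+1})$ of $D-v$ (existing by minimality of $D$) with $V_1$ the independent part, and bound each part in terms of $\ell$. Since $\{u,v\}$ is a homogeneous strong clique of $D$, Lemma \ref{hc} forces $u \in V_1$; were $u$ in some $V_i$ with $i\ge 2$, the partition would extend to an $M$-partition of $D$, contradicting the obstruction property.

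For every $i \ge 2$, the part $V_i$ is homogeneous in $D-v$ by definition of the partition, and $v$ inherits all external adjacencies from its twin $u$, so $V_i$ remains a homogeneous strong clique in all of $D$. Lemma \ref{boundsc} then yields $|V_i| \le k+1 = 2$. Thus if we can also prove that $V_1 = \{u\}$, we obtain
\[
|V(D)| = 1 + |V_1| + \sum_{i=2}^{\ell+1}|V_i| \le 1 + 1 + 2\ell = 2(\ell+1),
\]
and the first half of the lemma follows.

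The crux is therefore to show $V_1 = \{u\}$. Assume for contradiction that some $w \in V_1 \setminus \{u\}$ exists. Note that $u$ and $w$ are false twins in $D-v$ (they have the same external adjacencies by homogeneity of $V_1$ and are non-adjacent), with $v$ the unique vertex of $D$ distinguishing them. By Lemma \ref{gen} we may further assume that the maximum homogeneous strong clique and the maximum homogeneous independent set of $D$ both have size at most $2$ (else the bound is already at hand). Since $D$ is minimal, $D-w$ admits an $M$-partition $(W_1, \dots, W_{\ell+1})$. Applying Lemma \ref{hc} to the homogeneous strong clique $\{u,v\}$, together with the fact that $w$ is non-adjacent in $D$ to both $u$ and $v$ (since $v$ inherits $u$'s non-adjacency to $w$), one can rule out most placements of $u$ and $v$ in $(W_1,\dots,W_{\ell+1})$. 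In each remaining placement, combining the matrix entries forced by the two partitions $(V_1,\dots,V_{\ell+1})$ of $D-v$ and $(W_1,\dots,W_{\ell+1})$ of $D-w$ lets us reassemble the parts into an $M$-partition of $D$ itself, the desired contradiction.

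The symmetric statement, for $\ell=1$ and a pair of false twins, follows by applying this argument to the complement of $D$, which interchanges the roles of $k$ and $\ell$ and those of true and false twins. The main obstacle will be the case analysis required to establish $V_1=\{u\}$: because $k=1$, the count used in Lemma \ref{k+1sc} to force $|V_i|=1$ for $i\le k$ is unavailable, and one must instead exploit both partitions (of $D-v$ and of $D-w$) simultaneously, tracking the positions of $u,v$ in each and verifying that every configuration not immediately excluded by Lemma \ref{hc} permits an explicit construction of an $M$-partition of $D$.
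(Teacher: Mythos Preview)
Your opening moves agree with the paper's: delete one true twin $v$, take an $M$-partition $(V_1,\dots,V_{\ell+1})$ of $D-v$, use Lemma~\ref{hc} to force $u\in V_1$, and observe that each $V_i$ with $i\ge 2$ is a homogeneous strong clique of $D$, hence has size at most $k+1=2$ by Lemma~\ref{boundsc}. Where you diverge from the paper is at the ``crux'': you aim to prove $V_1=\{u\}$ outright, while the paper only bounds $|V_1|\le 3$ (since $V_1\setminus\{u\}$ is a homogeneous independent set, of size at most $2$ once Lemma~\ref{gen} has been invoked), then assumes $|V(D)|\ge 2\ell+3$ and derives a contradiction through a structural claim and a two-case analysis.

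The gap in your proposal is precisely this crux. First, Lemma~\ref{hc} as stated (and proved) concerns an $M$-partition of $D-s$ for a vertex $s$ belonging to the homogeneous set $S$; it does not constrain the placement of $u,v$ in an $M$-partition of $D-w$ when $w\notin\{u,v\}$, so your appeal to it here is unjustified. Second, and more seriously, the sentence ``combining the matrix entries forced by the two partitions \dots\ lets us reassemble the parts into an $M$-partition of $D$'' is a hope, not an argument: you neither enumerate the placements of $u,v$ in $(W_1,\dots,W_{\ell+1})$ nor exhibit, for any of them, the promised $M$-partition of $D$. In fact there is no reason to expect $V_1=\{u\}$ in general; the paper's proof explicitly works with $|V_1|=3$ and only reaches a contradiction under the additional hypothesis $|V(D)|\ge 2\ell+3$. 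What actually drives the paper's argument is a claim you do not have: if $w\in V_1\setminus\{u\}$ then $w$ is non-adjacent to every vertex in each part $V_i$ of size $\ge 2$. This claim, together with a split into the cases ``some $V_i$ is a singleton'' versus ``all $|V_i|\ge 2$'', is where the real work lies. Finally, the paper disposes of $\ell\le 1$ separately (by small-matrix results) before invoking Lemma~\ref{gen}; your sketch silently assumes $\ell\ge 2$.
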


\begin{proof}
Note that when $k=1$, by Lemma \ref{boundsc}, every homogeneous strong clique has at most two vertices.   If $\ell \le 1$, then the matrix $M$ has size less than or equal to $2$.   For symmetric matrices $M$ the conclusion follows from \cite{fedhel}; thus, up to symmetry, it remains to consider the matrix with rows $(0,0)$ and $(1,1)$.   It is not difficult to see that the minimal obstructions of this matrices have at most three vertices, cf. \cite{helher}.   Thus, we will assume that $\ell \ge 2$; hence we may apply Lemma \ref{gen} unless the size of an independent homogeneous set in $D$ is at most $2$.

Let $\{x, y\}$ be a pair of true twins in $D$, and let $(V_1, \dots, V_{\ell+1})$ be an $M$-partition of $D-x$.   Clearly $y \in V_1$.   Note that $V_i$ is a homogeneous strong clique of $D$ for every $2 \le i \le \ell+1$ and $V_1 \setminus  \{ y \}$ is a homogeneous independent set of $D$; therefore, by Lemma \ref{boundsc}, $|V_i| \le 2$ for every $2 \le i \le \ell+1$ and $|V_1| \le 3$.   We want to prove that $|V(D)| \le 2(\ell+1)$; we will proceed by contradiction.   Assume that  $|V(D)| > 2(\ell+1)$, \emph{i.e.}, $|V(D)| \ge 2\ell + 3$.

\begin{clm*}
If $w \in V_1 \setminus \{y\}$, then $w$ is non-adjacent to every vertex in each $V_i$ such that $|V_i| \ge 2$.
\end{clm*}

\begin{proof}[Proof of Claim]
Else, $w$ is adjacent to $z \in V_i$ for some $2 \le i \le \ell+1$ such that there exists $z \ne u \in V_i$.    Let $(X_1, \dots, X_{\ell+1})$ be an $M$-partition of $D-z$.   Again, $u \in X_1$, and $X_1 \setminus \{u\}$ and $X_i$ are homogeneous sets for $2 \le i \le \ell+1$.   Since $w$ is non-adjacent to $u$, then $w \in X_i \ne X_1$.   We will consider two cases, $|X_i|=1$ and $|X_i|=2$.

If $|X_i|=1$, then $X_i$ is the only singleton in the partition $(X_1, \dots, X_{\ell+1})$ and $|X_1|=3$.   So, $X_1= \{ u, a, b\}$.   Since there is at most one singleton in the partition $(V_1, \dots, V_{\ell+1})$, we will assume without loss of generality that there exists $a' \in V(D)$ such that $a$ and $a'$ are true twins in $D$.   Thus, $a' \notin X_1$ and there exist $a'' \in V(D)$ and $2 \le n \le \ell+1$ such that $X_n = \{ a', a'' \}$.   Therefore, $\{a, a', a''\}$ is a homogeneous strong clique in $D$, a contradiction. 

For the case $|X_i|=2$, suppose that $X_i = \{w, a\}$.   If $a$ has a false twin $a'$ in $D$, then $\{w, a, a'\}$ is a homogeneous strong clique in $D$, a contradiction.   Otherwise, $\{a\}$ is the only singleton in the partition $(V_1, \dots, V_\ell+1)$.   Since $|V(D)| \ge 2\ell + 3$, we have $|X_1| \ge 2$, so consider $b \in X_1 \setminus \{u\}$.   We know that there exists a false twin, $b'$, of $b$ in $D$.   If $\{b'\}$ is a singleton in $(X_1, \dots, X_{\ell+1})$, then $X_1 = \{ u, b, c\}$ for some vertex $c \in V(D)$.   But there is at most one singleton in each of the partitions $(V_1, \dots, V_{\ell+1})$ and $(X_1, \dots, X_{\ell+1})$.   Hence, $c$ has a false twin in $D$, $c'$, such that $c' \notin X_1$ and thus $c'$ has a false twin in $D$, $c''$.   But again, $\{c, c', c''\}$ is a homogeneous strong clique of $D$, a contradiction.
\renewcommand{\qedsymbol}{$\blacksquare$}
\end{proof}

We will consider two cases.   For Case 1 we will suppose $|V_i| = 1$ for some $2 \le i \le \ell+1$; Case 2 is when $|V_i| \ge 2$ for every $1 \le i \le \ell+1$.

In Case 1, since $|V(D)| \ge 2\ell + 3$, we have $V_1 = \{ y, w_1, w_2 \}$.   Also, we will assume without loss of generality that $V_2 = \{ z \}$; notice that this is the only singleton in the partition.   Let $(W_1, \dots, W_{\ell+1})$ be an $M$-partition of $D-w_1$.   Since $w_1$ and $w_2$ are false twins in $D$, Lemma \ref{hc} implies that $w_2 \in W_i$ for some $2 \le i \le \ell+1$.   We will assume without loss of generality that $w_2 \in W_2$.   But $W_2$ is a strong clique and, from the Claim, $w_2$ is non-adjacent to every vertex in $V(D) \setminus \{z\}$; hence, $|W_2| \le 2$.   Again, for every $1 \le i \le \ell+1$, it is easy to note that $W_i$ is a homogeneous set of $D$, and thus, $|W_i| \le 2$ for $i \in \{ 1, 3, 4, \dots, \ell+1 \}$.   Recalling that $|V(D)| \ge 2\ell+3$, we conclude that $|W_2| = 2$; therefore, $z \in W_2$.   Hence, there are vertices $a, a', a'' \in V(D)$ and integers $2 \le n, n' \le \ell+1$ such that $a, a' \in V_n$ and $a', a'' \in V_{n'}$
 .   Therefore $\{a, a', a''\}$ is a homogeneous strong clique in $D$, a contradiction.

For Case 2, let $w$ be a vertex such $y, w \in V_1$ and let $(W_1, \dots, W_{\ell+1})$ be an $M$-partition of $D-w$.   If $|W_1| > 1$, then there are vertices $u, v \in W_1$ which are false twins in $D$ and have true twins $u'$ and $v'$, respectively, in $D$.   But this results in a contradiction, because $u' \notin W_1$ and hence $u'$ distinguishes $u$ and $v$.   Hence, $|W_1|=1$.   Since $|V(D)| \ge 2\ell + 3$, there is an integer $2 \le i \le \ell+1$ such that $|W_i|=3$.   But such $W_i$ would be a homogeneous strong clique of $D$ with $3$ vertices, unless one of the vertices in $W_i$ is a vertex $w'$ such that $w' \in V_1 \setminus \{y,w\}$.   But following the Claim, such $w'$ would be non-adjacent to every vertex in $D$, contradicting that $W_i$ is a strong clique.   So, Case 2 is not possible.

Since in every case a contradiction arose from the assumption $|V(D)| > 2(\ell+1)$, we conclude that $|V(D)| \le 2(\ell+1)$.
\end{proof}

\begin{lem}\label{sc=2}
Let $D$ be a minimal $M$-obstruction.   If the size of a maximum homogeneous strong clique of $D$ is $2$, then $|V(D)| \le (k+1)(\ell +1)$.   Similarly, if the size of a maximum homogeneous independent set of $D$ is $2$, then $|V(D)| \le (k+1)(\ell +1)$.
\end{lem}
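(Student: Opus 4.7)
The plan is to imitate the strategy of Lemma~\ref{k=1-sc=2} under the weaker hypothesis $k\ge 2$. By passing to complements it suffices to treat the strong clique case, so assume every homogeneous strong clique of $D$ has exactly two vertices (so in particular $D$ contains a pair of true twins $\{u,v\}$). Using Lemma~\ref{gen} we may further assume that every homogeneous independent set of $D$ also has at most two vertices; using Lemma~\ref{k=1-sc=2} we may assume $k\ge 2$; and if $D$ additionally contains a pair of false twins, then by the dual of Lemma~\ref{k=1-sc=2} we may assume $\ell\ge 2$.

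Since $D$ is a minimal $M$-obstruction, $D-v$ admits an $M$-partition $(V_1,\dots,V_{k+\ell})$, and Lemma~\ref{hc} applied to the strong clique $\{u,v\}$ forces $u$ to lie in some independent part $V_j$ with $j\le k$ (otherwise inserting $v$ alongside $u$ would extend the partition to $D$). The key observation is that every vertex of $D$ other than $u$ and $v$ has identical adjacency to $u$ and to $v$, because $u,v$ are true twins. Combined with the homogeneity of each $V_i$ within $D-v$, this implies that $V_i$ is a homogeneous set of $D$ for every $i\ne j$, and that $V_j\setminus\{u\}$ is a homogeneous set of $D$. Since each such set is either independent or a strong clique, the max-2 hypotheses yield $|V_i|\le 2$ for $i\ne j$ and $|V_j|\le 3$.

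Summing, $|V(D)|\le 1+3+2(k+\ell-1)=2k+2\ell+2$, and the inequality $2k+2\ell+2\le (k+1)(\ell+1)$ is equivalent to $(k-1)(\ell-1)\ge 2$, which holds in every case except $k=\ell=2$. That case is the main obstacle: the generic bound produces $10$ while we need $9$. I would dispose of it by a refined case analysis in the style of the Claim inside Lemma~\ref{k=1-sc=2}. In the extremal configuration $|V_j|=3$, the pair $V_j\setminus\{u\}=\{a,b\}$ is a homogeneous false twin pair of $D$; I would then consider an $M$-partition of $D-a$, apply the dual of Lemma~\ref{hc} to the pair $\{a,b\}$, and combine the constraints coming from the true twin pair $\{u,v\}$ with those coming from $\{a,b\}$. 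The resulting abundance of twin pairs—together with the fact that each remaining part must simultaneously be homogeneous with respect to both partitions—should rule out $|V(D)|=10$, giving the desired bound. The delicate bookkeeping in this small subcase is where I expect the real work of the proof to lie.
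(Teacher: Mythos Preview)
Your overall strategy matches the paper's, but there is a genuine gap in the case analysis. You reduce to $\ell\ge 2$ \emph{only} under the assumption that $D$ contains a pair of false twins; when $D$ has no false twins (equivalently, the maximum homogeneous independent set has size~$1$) this reduction is unavailable, and $\ell$ may equal $0$ or $1$. In that range your inequality $(k-1)(\ell-1)\ge 2$ fails for every $k$, so the bound $2k+2\ell+2$ is useless there. The paper treats this explicitly: for $\ell=0$ it invokes Lemma~\ref{nottwinszero}, and for the remaining ``no false twins'' case it observes that every independent part $V_i$ with $i\ne j$ has $|V_i|\le 1$ (not just $\le 2$), which sharpens the count to $k+2\ell+2\le(k+1)(\ell+1)$ whenever $k\ge 2$ and $\ell\ge 1$. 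Your proposal is missing this branch entirely.

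There is also a difference in how the ``both twin types present'' case is handled. Rather than isolating $k=\ell=2$ and attacking it by an ad hoc argument, the paper proves for all $k,\ell\ge 2$ that the extremal configuration $|V_j|=3$ together with $|V_i|=2$ for every $i\ne j$ is impossible (by passing to an $M$-partition of $D-x$ for a suitable $x\in V_j\setminus\{u\}$ and deriving a contradiction from the homogeneity constraints). This yields the sharper generic bound $|V(D)|\le 2(k+\ell)+1$, which is equivalent to $(k-1)(\ell-1)\ge 1$ and therefore covers $k=\ell=2$ without any residual subcase. Your sketch for $k=\ell=2$ is heading in a similar direction, but note that the same contradiction argument, once carried out, already works uniformly and makes the special-case bookkeeping unnecessary.
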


\begin{proof}
If $\ell=0$, then Lemma \ref{boundsc} implies that there are no false twins in $D$.  Hence, Lemma \ref{nottwinszero} gives us the desired bound.

Since there is a homogeneous set of size $2$, Lemma \ref{boundsc} implies $1 \le k$.  If $k=1$, then Lemma \ref{k=1-sc=2} implies the result.   Thus, we will assume $k \ge 2$.

Let $\{u,v\}$ be a homogeneous strong clique of $D$.   By the minimality of $D$, the digraph $D-v$ has an $M$-partition $(V_1, \dots, V_{k+\ell})$ and we can assume without loss of generality that $u \in V_1$.   It is easy to observe that $V_j$ is a homogeneous set of $D$ for every $2 \le j \le k + \ell$ and  $V_1 \setminus \{ u \}$ is a homogeneous independent set of $D$

By Lemma \ref{gen} we can assume that the size of a maximum homogenous independent set of $D$ is less than or equal to $2$; let us assume first that it is $2$.   Hence $\ell \ge 2$, otherwise Lemma \ref{k=1-sc=2} would imply the result.    Also, $|V_j| \le 2$ for every $2 \le j \le k + \ell$ and $|V_1| \le 3$.

If $|V_j| = 2$ for every $2 \le j \le k + \ell$ and $|V_1| = 3$, then there are vertices $x,y \in V(D) \setminus \{ u, v \}$ such that $V_1 = \{ u, x, y \}$.   Let $(X_1, \dots, X_{k+l})$ be an $M$-partition of $D-x$.   Since $y$ is a twin of $x$, if $y \notin X_i$, then $X_i$ is an homogeneous set of $D$ for every $1 \le i \le k + \ell$ and hence $|X_i| = 2$.   Lemma \ref{hc} implies that $y \notin X_j$ for every $1 \le j \le k$.

If $y \in X_j$ for some $k+1 \le j \le k+\ell$, then $|X_j| = 3$ and $X_j$ is a homogeneous strong clique of $D-x$.   Let $a,b \in V(D) \setminus \{ x, y \}$ be vertices such that $X_j = \{ y, a, b \}$.   Since $\{u, x, y \}$ is an independent set, $\{ a, b \} \cap \{ u, v \} = \varnothing$.    Now, $u \in X_i$ for some $i \ne j$, $1 \le i \le k + \ell$.   Recall that $\{u, y \}$ is an independent set of $D$ and an homogeneous set of $D-v$, hence $\{ u, a \}$ is a strong clique of $D$.   But this contradicts the fact that $\{ y, a, b \}$ is an homogeneous set of $D-x$.   Hence, either $|V_1| \le 2$ or $|V_i| \le 1$ for some $2 \le i \le k + \ell$.

Hence,
\begin{IEEEeqnarray*}{rCl}
|V(D)| & = & \sum_{i=1}^{k+\ell} |V_i| + 1 \\
& = &  |V_1| + \sum_{i=2}^{k} |V_i| + \sum_{i=k+1}^{k+\ell} |V_i| + 1 \\
& \le &  3 + 2 (k-1) - 1 + 2 \ell + 1 \\
& = &  2 (k + \ell) + 1.
\end{IEEEeqnarray*}

We can observe that $2 (k + \ell) + 1 > (k+1)(\ell + 1)$ if and only if $k+\ell > k \ell$, and the last inequality is satisfied if and only if either $k \le 1$ or $\ell \le 1$.   Since $k \ge 2$ and $\ell \ge 2$ the desired result is then obtained.

If $D$ has no homogeneous independent sets, then $|V_1| \le 2$,  $|V_j| \le 1$ for every $2 \le j \le k$ and $|V_j| \le 2$ for every $k+1 \le j \le \ell$.   Therefore,
\begin{IEEEeqnarray*}{rCl}
|V(D)| & = & \sum_{i=1}^{k+\ell} |V_i| + 1 \\
& = &  |V_1| + \sum_{i=2}^{k} |V_i| + \sum_{i=k+1}^{k+\ell} |V_i| + 1 \\
& \le &  2 + k-1 + 2 \ell + 1 \\
& = &  k + 2 \ell + 2.
\end{IEEEeqnarray*}

We can observe that $k + 2 \ell + 2 > (k+1)(\ell + 1)$ if and only if $\ell+1 > k \ell$, and the last inequality is satisfied if and only if $k \le 1$ or $\ell = 0$.   But $k \ge 2$ and we are assuming that $\ell > 0$, thus the result follows.
\end{proof}

Now Theorem \ref{twins} follows from Lemmas, \ref{gen}, \ref{sc=2}. Moreover, Theorems \ref{notwins} and \ref{twins} imply our main result below.

\begin{thm}
Let $M$ be a $\{ 0, 1 \}$-pattern.   If $D$ is a minimal $M$-obstruction, then $|V(D)| \le (k+1)(\ell+1)$.
\end{thm}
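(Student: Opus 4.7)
The plan is a straightforward case split that combines the two main dichotomy theorems already established. Given a minimal $M$-obstruction $D$, I would partition the possibilities according to whether $D$ contains any pair of twins, so that Theorems \ref{notwins} and \ref{twins} between them cover every case.

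In the first case, $D$ contains neither a pair of false twins nor a pair of true twins. Then Theorem \ref{notwins} applies verbatim and yields $|V(D)| \le (k+1)(\ell+1)$. In the complementary case, $D$ contains at least one pair of (true or false) twins. Then Theorem \ref{twins} applies and gives the same bound $|V(D)| \le (k+1)(\ell+1)$. Since the two cases are mutually exclusive and exhaustive, the bound holds for every minimal $M$-obstruction.

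All of the substantive work has already been carried out in the earlier sections. The twin-free case rests on the digraph analogue of Sumner's theorem (Theorem \ref{nott}), together with the careful $M$-partition analysis of $D-v$ for a vertex $v$ whose removal preserves point-determination (Lemmas \ref{nottwinszero} and \ref{notwinsnozero}). The case with twins is handled by an induction on the size of a largest homogeneous strong clique (Lemmas \ref{k+1sc} and \ref{gen}), combined with the separate small-$k$ and size-two analyses of Lemmas \ref{k=1-sc=2} and \ref{sc=2}. Consequently, at the final step there is essentially no obstacle to overcome: the main result is simply the disjunction of Theorems \ref{notwins} and \ref{twins}, and the proof reduces to a one-line case analysis.
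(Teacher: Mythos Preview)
Your proposal is correct and matches the paper's own approach exactly: the paper simply states that Theorems \ref{notwins} and \ref{twins} together imply the main result, which is precisely your two-case split on whether $D$ has a pair of twins. There is nothing to add.
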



\end{document}